\tikzstyle{vertex}=[circle, draw, inner sep=1pt, minimum size=8pt]
\newcommand{\vertex}{\node[vertex]}
\newtheorem{theorem}{Theorem}[section]
\newtheorem{definition}[theorem]{Definition}
\newcommand{\noi}{\noindent}
\newcommand{\f}{\mathsf{F}}
\newcommand{\N}{\mathbb{N}}
\newcommand{\cC}{\mathcal{C}}
\newcommand{\J}{\mathscr{J}}
\title{\sc On $J$-Colorability of Certain Derived Graph Classes}
\author{{\sc Federico Fornasiero}}
\affil{\small Department of mathemathic\\ Universidade Federal de Pernambuco \\ Recife, Pernambuco, Brasil\\{\tt federico@dmat.ufpe.br}}
\author{{\sc Sudev Naduvath}}
\affil{\small Centre for Studies in Discrete Mathematics\\ Vidya Academy of Science \& Technology \\ Thrissur, Kerala, India.\\{\tt sudevnk@gmail.com}}
\date{}
\begin{document}
\maketitle

\begin{abstract}
A vertex $v$ of a given graph $G$ is said to be in a rainbow neighbourhood of $G$, with respect to a proper coloring $\cC$ of $G$, if the closed neighbourhood $N[v]$ of the vertex $v$ consists of at least one vertex from every colour class of $G$ with respect to $\cC$. A maximal proper colouring of a graph $G$ is a $J$-colouring of $G$ if and only if every vertex of $G$ belongs to a rainbow neighbourhood of $G$. In this paper, we study certain parameters related to $J$-colouring of certain Mycielski type graphs.
\end{abstract}

\noi \textbf{Key Words}: Mycielski graphs, graph coloring, rainbow neighbourhoods, $J$-coloring of graphs. 

\vspace{0.3cm}

\noi \textbf{Mathematics Subject Classification 2010}: 05C15, 05C38, 05C75.


\section{Introduction}

For general notations and concepts in graphs and digraphs we refer to \cite{BM1,FH1,DBW}. For further definitions in the theory of graph colouring, see \cite{CZ1,JT1}. Unless specified otherwise, all graphs mentioned in this paper are simple, connected  and undirected graphs. 

\subsection{Mycielskian of Graphs}

Let $G$ be a triangle-free graph with the vertex set $V(G)=\{v_1,\ldots,v_n\}$. The \textit{Mycielski graph} or the \textit{Mycielskian} of a graph $G$, denoted by $\mu(G)$, is the graph with vertex set $V(\mu(G))=\{v_1,v_2,\ldots, v_n, u_1,u_2, \ldots, u_n, w\}$ such that $v_iv_j\in E(\mu(G))\iff v_iv_j\in E(G)$, $v_iu_j\in E(\mu(G))\iff v_iv_j\in E(G)$ and $u_iw\in E(\mu(G))$ for all $i=1,\ldots, n$.

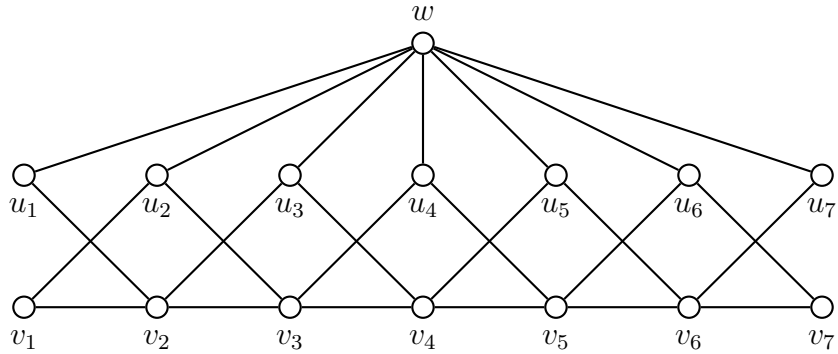
\begin{figure}[h!]
\centering
\begin{tikzpicture}[auto,node distance=1.75cm,
thick,main node/.style={circle,draw,font=\sffamily\Large\bfseries}]

\vertex [label=below:$v_1$] (1) {};
\vertex [label=below:$v_2$] (2) [right of=1] {};
\vertex [label=below:$v_3$] (3) [right of=2] {};
\vertex [label=below:$v_4$] (4) [right of=3] {};
\vertex [label=below:$v_5$] (5) [right of=4] {};
\vertex [label=below:$v_6$] (6) [right of=5] {};
\vertex [label=below:$v_7$] (7) [right of=6] {};
\vertex [label=below:$u_1$] (8) [above of=1] {};
\vertex [label=below:$u_2$] (9) [above of=2] {};
\vertex [label=below:$u_3$] (10) [above of=3] {};
\vertex [label=below:$u_4$] (11) [above of=4] {};
\vertex [label=below:$u_5$] (12) [above of=5] {};
\vertex [label=below:$u_6$] (13) [above of=6] {};
\vertex [label=below:$u_7$] (14) [above of=7] {};
\vertex [label=above:$w$] (15) [above of=11] {};
\path[every node/.style={font=\sffamily\small}]
(1) edge node {} (2)
edge node {} (9)
(2) edge node {} (3)
edge node {} (8)
edge node {} (10)
(3) edge node {} (4)
edge node {} (9)
edge node {} (11)
(4) edge node {} (5)
edge node {} (10)
edge node {} (12)
(5) edge node {} (6)
edge node {} (11)
edge node {} (13)
(6) edge node {} (7)
edge node {} (12)
edge node {} (14)
(7) edge node {} (13)
(15) edge node {} (8)
edge node {} (9)
edge node {} (10)
edge node {} (11)
edge node {} (12)
edge node {} (13)
edge node {} (14);
\end{tikzpicture}
\caption{The Mycielski graph $\mu(P_7)$}
\end{figure}

In the above mentioned conditions of Mycielski graphs, we call the two vertices $v_i,$ $u_i$ \textit{twin vertices} and the vertex $w$ is called the \textit{root vertex} of the Mycielskian $\mu(G)$. 

By a \textit{Mycielski type graph}, we mean a graph that can be constructed from the Mycielski graphs or the graphs generated from a given graphs using some or similar rules of constructing their Mycielski graphs. 

\subsection{$J$-Coloring of Graphs}

The notion of $J$-coloring of a graph, has been defined for the first time in \cite{NKS} as follows:

\begin{definition}\label{Def-1.1}{\rm
\cite{NKS} A graph $G$ is said to have a \emph{$J$-colouring} $\mathcal{C}$ if it has the maximal number of colours such that if every vertex $v$ of $G$ belongs to a rainbow neighbourhood of $G$. The number of colours in a $J$-coloring $\cC$ of $G$ is called the \emph{$J$-colouring number} of $G$.
}\end{definition}

\begin{definition}\label{Def-1.2}{\rm
\cite{NKS} A graph $G$ is said to have a \emph{$J^*$-colouring} $\mathcal{C}$ if it has the maximal number of colours such that if every internal vertex $v$ of $G$ belongs to a rainbow neighbourhood of $G$. The number of colours in a $J^*$-coloring $\cC$ of $G$ is called the \emph{$J^*$-colouring number} of $G$.
}\end{definition}

It can be noted that all graphs, in general, need not have a $J$-coloring. Hence, the studies on the graphs which admit $J$-coloring and their properties and structural characterisations attract much interests. Some studies in this direction can be seen in \cite{KS1,NKS}. 

The initial purpose of this paper is to study the $J$-colourability of the Mycielskian and certain Mycielski type graphs of some fundamental graph classes.


\section{$J$-Colourability of Mycielski Graphs}

Note that the Mycielski graph $\mu(G)$ of a graph $G$ has no pendant vertices and hence the $J$-colouring and the $J^*$-colouring of the Mycielski graphs $\mu(G)$, if they exist, are the same. We first try to repeat the original demonstration of Mycielskian graph. To do that, we have to fix a $J$-colouring on the graph $G$.

\begin{theorem}
Let $G$ be a graph with $J$-colouring $\cC=\{c_1,c_2,c_3\ldots, c_k\}$. Then, the graph $\mu(G)$ is not $J$-colourable (and so, it is not $J^*$-colourable).
\end{theorem}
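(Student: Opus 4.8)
The plan is to argue by contradiction. Since $\mu(G)$ has no pendant vertex, a $J$-colouring of $\mu(G)$, if it existed, would just be a maximal proper colouring in which every vertex lies in a rainbow neighbourhood; so it suffices to assume that $\mu(G)$ carries \emph{some} proper colouring $\cC'$, with $\ell$ colours $c_1,\dots,c_\ell$, for which every vertex is in a rainbow neighbourhood, and to derive a contradiction. After relabelling I would assume the root vertex $w$ receives the colour $c_\ell$; note also $\ell\ge\chi(\mu(G))=\chi(G)+1\ge 3$ since $G$ has an edge.

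I would first read off the rigid local structure forced by the three kinds of closed neighbourhood. Because $N[w]=\{w,u_1,\dots,u_n\}$ and every $u_i$ is adjacent to $w$, no $u_i$ carries $c_\ell$, and for $w$ to see a rainbow neighbourhood the vertices $u_1,\dots,u_n$ must jointly realise all of $c_1,\dots,c_{\ell-1}$. Next, for each $i$ we have $N[u_i]=\{u_i,w\}\cup N_G(v_i)$; since $w$ supplies $c_\ell$, $u_i$ supplies $\cC'(u_i)$, and every $v_j\in N_G(v_i)$ is also adjacent to $u_i$ in $\mu(G)$ (hence $\cC'(v_j)\ne\cC'(u_i)$), the set $N_G(v_i)$ must realise precisely $\{c_1,\dots,c_{\ell-1}\}\setminus\{\cC'(u_i)\}$; in particular $\deg_G(v_i)\ge\ell-2$ for every $i$. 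Finally, for each $i$, $N[v_i]=\{v_i\}\cup N_G(v_i)\cup\{u_j:v_jv_i\in E(G)\}$, and since colour $c_\ell$ appears on no $u$-vertex, $v_i$ can only see $c_\ell$ on itself or on a $G$-neighbour; hence $D:=\{v_i:\cC'(v_i)=c_\ell\}$ is a dominating set of $G$ (and is independent, being a colour class).

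The core of the proof is then to imitate, in reverse, Mycielski's classical chromatic argument: recolour each $v_i\in D$ by $\cC'(u_i)$ — legitimate by the first two observations above — and leave every other $v_i$ untouched, obtaining a proper colouring $\cC''$ of $G$ that uses only $c_1,\dots,c_{\ell-1}$. Using the neighbourhood identities from the previous paragraph, together with a case analysis according to whether $\cC'(v_i)$ equals $\cC'(u_i)$, I would check that under $\cC''$ every vertex of $G$ is still in a rainbow neighbourhood, so that $G$ possesses a rainbow-neighbourhood colouring; a comparison of colour counts then contradicts the maximality of the given $J$-colouring $\cC$ (this is where the hypothesis that $G$ is $J$-colourable enters). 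I expect this last step to be the real obstacle: the vertices $u_1,\dots,u_n$ form an \emph{independent} set of $\mu(G)$, so the assignment $v_i\mapsto\cC'(u_i)$ is not automatically a proper colouring of $G$, and the vertices where $\cC'(v_i)\ne\cC'(u_i)$ are precisely those where a naïve restriction of $\cC'$ to $V(G)$ fails to be rainbow and must be repaired with care. As a consistency check the scheme should recover the known facts that $\mu(K_2)=C_5$, as well as $\mu(P_3)$ and $\mu(C_4)$, admit no $J$-colouring.
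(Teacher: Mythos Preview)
Your neighbourhood computations are correct, but the final ``comparison of colour counts'' does not yield a contradiction. Suppose your recoloured map $\cC''$ really is a proper rainbow-neighbourhood colouring of $G$ using (at most) $\ell-1$ colours. Since $\J(G)=k$ is by definition the \emph{maximum} number of colours in any such colouring of $G$, the conclusion is only $\ell-1\le k$, i.e.\ an upper bound on the hypothetical $\J(\mu(G))$. You have no independent lower bound forcing $\ell>k+1$, so nothing is contradicted. The Mycielski recolouring trick works for $\chi$ precisely because $\chi$ is a \emph{minimum}: a colouring of $\mu(G)$ with too few colours produces one of $G$ with even fewer. The $J$-number points the other way, and the analogy collapses exactly at the step you flagged as ``where the hypothesis that $G$ is $J$-colourable enters''.

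What actually closes the argument is a sharpening of your second observation, and it makes the detour through $\cC''$ unnecessary. From the rainbow condition on $u_i$ you get more than you recorded: the colour $\cC'(v_i)$ must itself appear in $N_{\mu(G)}[u_i]=\{u_i,w\}\cup N_G(v_i)$, and it cannot appear on any $v_j\in N_G(v_i)$ by properness, so $\cC'(v_i)\in\{\cC'(u_i),c_\ell\}$ for \emph{every} $i$. Now pick any $v_1\in D$ (your dominating-set argument shows $D\ne\emptyset$). Each $G$-neighbour $v_j$ of $v_1$ lies outside $D$, hence $\cC'(v_j)=\cC'(u_j)$; and $u_1v_j\in E(\mu(G))$ forces $\cC'(u_j)\ne\cC'(u_1)$. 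Therefore the closed neighbourhood $N_{\mu(G)}[v_1]=\{v_1\}\cup N_G(v_1)\cup\{u_j:v_j\in N_G(v_1)\}$ carries only the colours $c_\ell$ and $\{\cC'(u_j):v_j\in N_G(v_1)\}$, and misses $\cC'(u_1)$ altogether --- so $v_1$ is not in a rainbow neighbourhood, the desired contradiction. This is precisely what the paper's case analysis establishes (its Case~(i) is the observation $\cC'(v_i)\in\{\cC'(u_i),c_\ell\}$, and Cases~(ii)(a)--(b) chase down the missing colour at $v_1$); note it never uses that $G$ is $J$-colourable, so the conclusion holds for arbitrary $G$.
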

\begin{proof}
Note that $\mu(K_2)$ is isomorphic to $C_5$ and hence it is not $J-colourable$ (as it is proved in \cite{NKS}). Hence, we can consider graphs with order greater than $2$. Let us assume that we can have a new $J$-colouring $\cC=\{c_1,c_2\ldots, c_j\}$ on $\mu(G)$. Without loss of generality, we can assume now that the colour of $w$ is $c'_1$. Every vertex $u_i$ have to be coloured with one of the other colour $c_2,\ldots, c_j $. Hence, there exists at least a vertex $v_1$ with colour $c_1$. Let be $v_2$ the vertex connected to $v_1$ such as the twin vertex $u_2$ has the colour $c_2$. Here, we have the following two possibilities:

\begin{enumerate}\itemsep0mm
\item[(i)]: If the colour of $v_2$ is different from the previous ones, let us say $c_3$, we have that for the definition of rainbow neighbourhood even the twin vertex $u_2$ has to be connected with a vertex with the same colour, and it has to be a vertex $v_3$ because no one of the vertices $u_j$ are connected, and $w$ has the colour $c_1$. But if it is so, than for the construction of $\mu(G)$ even the vertex $v_2$ has to be connected with $v_3$, and so it is not a proper coloring because two vertices has the same colours (see Figure~\ref{case1}).

\begin{figure}[H]
\centering
\begin{tikzpicture}[auto,node distance=1.75cm,
thick,main node/.style={circle,draw,font=\sffamily\Large\bfseries}]
\vertex [label=left:$v_1$] (1) {\small{$c_1$}};
\vertex [label=right:$v_2$] (2) [right of=1] {\small{$c_3$}};
\vertex [label=left:$v_3$] (3) [left of=1] {\small{$c_3$}};
\vertex[label=left:$u_1$] (4) [above of=1] {\small{$c_2$}};
\vertex [label=right:$u_2$] (5) [right of=4] {\small{$c_2$}};
\vertex [label=above:$w$] (6) [above of=4] {\small{$c_1$}};
\path[every node/.style={font=\sffamily\small}]
(1) edge node {} (5)
edge node {} (2)
(2) edge node {} (4)
edge [bend left, dashed]  node {} (3)
(3) edge node {} (5)
(4) edge node {} (6)
(5) edge node {} (6);
\end{tikzpicture}
\caption{Case (i)}
\label{case1}
\end{figure}

\item[(ii)] If the colour of $v_2$ is $c_2$, the twin vertices $u_1$ of $v_1$ has to have a different colour (let us say $c_3$), because it is linked to $w$ that has the colour $c'_1$ and to $v_2$ that has the colour $c_2$. But, in this case the vertex $v_1$ has to be connected with another vertex which has the colour $c_3$.

So we have to differentiate two different possibilities:

\item[(ii)(a)] If the vertex $v_1$ is connected to a vertex $v_3$ who has the colour $c_3$ for the construction of $\mu(G)$ even the twin vertices $u_1$ is connected to $v_3$ and so it is not a proper coloring because two connected vertices have the same colour (see Figure~\ref{case2a}).

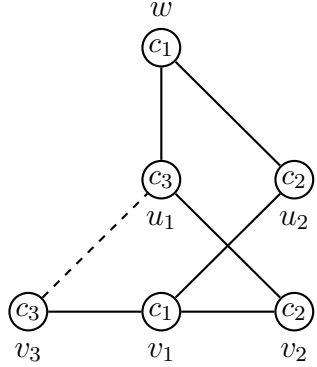
\begin{figure}[H]
\centering
\begin{tikzpicture}[auto,node distance=1.75cm,
thick,main node/.style={circle,draw,font=\sffamily\Large\bfseries}]

\vertex [label=below:$v_1$] (1) {\small{$c_1$}};
\vertex [label=below:$v_2$] (2) [right of=1] {\small{$c_2$}};
\vertex [label=below:$v_3$] (3) [left of=1] {\small{$c_3$}};
\vertex[label=below:$u_1$] (4) [above of=1] {\small{$c_3$}};
\vertex [label=below:$u_2$] (5) [right of=4] {\small{$c_2$}};
\vertex [label=above:$w$] (6) [above of=4] {\small{$c_1$}};
\path[every node/.style={font=\sffamily\small}]
(1) edge node {} (5)
edge node {} (2)
edge node {} (3)
(2) edge node {} (4)
(3) edge [dashed] node {} (4)
(4) edge node {} (6)
(5) edge node {} (6);

\end{tikzpicture}
\caption{Case (ii)(a)}
\label{case2a}
\end{figure}

\item[(ii)(b)] If the vertex $v_1$ is connected to a vertex $u_3$ which has the colour $c_3$, first we can note that the twin vertex $v_3$ cannot have a new colour, because it would lead to a contradiction over $u_3$ similar to the case (i).

Note that $v_3$ cannot have the colour $c_1$ (because for construction it is connected with $v_1$) nor the colour $c_3$ (because always for construction it is connected to the twin vertex $u_1$) so it has to be coloured with the colour $c_2$. But if it is so, $u_3$ needs to be connected with a vertex $v_i$ whose colour is $c_2$, but it cannot be the twin vertex $v_3$ for the construction, nor the vertex $v_2$ because it will lead to have the triangle $v_1v_2u_3v_1$. If the graph $G$ has only 3 nodes we reach a contradiction yet, if it is not let us call $v_4$ the vertex with colour $c_2$ connected to $u_3$. For the construction of $\mu(G)$ it has to be connected to the vertex $v_3$ and it finally leads to a contradiction because the two vertices would have the same colour (see Figure~\ref{case2b}).

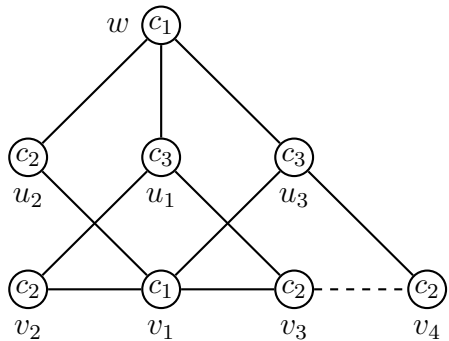
\begin{figure}[h!]
\centering
\begin{tikzpicture}[auto,node distance=1.75cm,
thick,main node/.style={circle,draw,font=\sffamily\Large\bfseries}]

\vertex [label=below:$v_1$] (1) {\small{$c_1$}};
\vertex [label=below:$v_2$] (2) [left of=1] {\small{$c_2$}};
\vertex [label=below:$v_3$] (3) [right of=1] {\small{$c_2$}};
\vertex [label=below:$u_1$] (4) [above of=1] {\small{$c_3$}};
\vertex [label=below:$u_2$] (5) [left of=4] {\small{$c_2$}};
\vertex [label=left:$w$] (6) [above of=4] {\small{$c_1$}};
\vertex [label=below:$u_3$] (7) [above of=3] {\small{$c_3$}};
\vertex [label=below:$v_4$] (8) [right of=3] {\small{$c_2$}};
\path[every node/.style={font=\sffamily\small}]
(1) edge node {} (5)
edge node {} (2)
edge node {} (3)
edge node {} (7)
(2) edge node {} (4)
(3) edge node {} (4)
edge [dashed] node {} (8)
(4) edge node {} (6)
(5) edge node {} (6)
(7) edge node {} (6)
edge node {} (8);
\end{tikzpicture}
\caption{Case (ii)(b)}
\label{case2b}
\end{figure}
\end{enumerate} %
Hence, the Mycielskian graph of any graph $G$ is not $J$-colourable, irrespective of whether the $G$ is $J$-colourable or not.
\end{proof}

\section{Some New Constructions}

Since Mycielskian of any graph does not admit a $J$-colouring, our immediate aim is to construct some simple connected graphs from certain given graphs such that new graphs also admit an extended $J$-coloring. In this section, we discuss the $J$-colorability of certain newly constructed Mycielski type graphs of a given graphs.

The first one among such graphs is the \textit{crib graph}, denoted by $c(G)$, of a graph $G$, which is defined in \cite{SSS} as follows:

\begin{definition}{\rm
\cite{SSS}The \textit{crib graph}, denoted by $c(G)$, of a graph $G$ is the graph whose vertex set is $V(\mu(G))=\{v_1,v_2,\ldots, v_n, u_1,u_2, \ldots, u_n, w\}$ such that $v_iv_j\in E(\mu(G))\iff v_iv_j\in E(G)$, $v_iu_j\in E(\mu(G))\iff v_iv_j\in E(G)$ and $v_iw,u_iw\in E(\mu(G))$ for all $i=1,\ldots, n$.
}\end{definition}

\noi Figure \ref{c(G)} depicts the crib graph of $P_6$.

\begin{figure}[h!]
\centering
\begin{tikzpicture}
\vertex (1) at (0,0) [label=below:$v_{1}$] {};
\vertex (2) at (2,0) [label=below:$v_{2}$] {};
\vertex (3) at (4,0) [label=below:$v_{3}$] {};
\vertex (4) at (6,0) [label=below:$v_{4}$] {};
\vertex (5) at (8,0) [label=below:$v_{5}$] {};
\vertex (6) at (10,0) [label=below:$v_{6}$] {};
\vertex (7) at (0,2) [label=left:$u_{1}$] {};
\vertex (8) at (2,2) [label=left:$u_{2}$] {};
\vertex (9) at (4,2) [label=below:$u_{3}$] {};
\vertex (10) at (6,2) [label=below:$u_{4}$] {};
\vertex (11) at (8,2) [label=right:$u_{5}$] {};
\vertex (12) at (10,2) [label=right:$u_{6}$] {};
\vertex (13) at (5,4) [label=above:$w$] {};
\path
(1) edge (2)
(2) edge (3)
(3) edge (4)
(4) edge (5)
(5) edge (6)
(7) edge (2)
(8) edge (1)
(8) edge (3)
(9) edge (2)
(9) edge (4)
(10) edge (3)
(10) edge (5)
(11) edge (4)
(11) edge (6)
(12) edge (5)
(1) edge [bend right=4] (13)
(2) edge [bend left=5] (13)
(3) edge (13)
(4) edge (13)
(5) edge [bend right=5] (13)
(6) edge [bend left=4] (13)
(7) edge (13)
(8) edge (13)
(9) edge (13)
(10) edge (13)
(11) edge (13)
(12) edge (13)
;
\end{tikzpicture}
\caption{Crib graph of $P_6$}
\label{c(G)}
\end{figure}
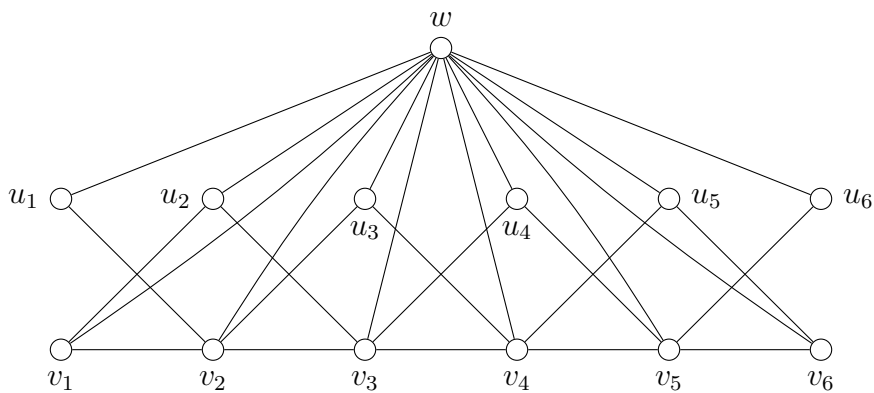

The following theorem discusses the admissibility of an extended $J$-coloring by the crib graph of a $J$-colorable graph $G$.

\begin{theorem}\label{Thm-3.2}
The crib graph $c(G)$ of a $J$-colourable graph $G$ is also $J$-colourable. Also, $\J(c(G))=\J(G)+1$.
\end{theorem}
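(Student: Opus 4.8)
The plan is to prove the two inequalities $\J(c(G))\ge\J(G)+1$ and $\J(c(G))\le\J(G)+1$ separately, the first by an explicit construction and the second by showing that any $J$-colouring of $c(G)$ restricts to a good colouring of $G$ after deleting the root colour.

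For the lower bound I would start from a $J$-colouring $\cC=\{c_1,\dots,c_k\}$ of $G$ with $k=\J(G)$ and lift it to $c(G)$ by setting $\cC^*(v_i)=\cC^*(u_i)=\cC(v_i)$ for every $i$ and giving the root $w$ a brand new colour $c_{k+1}$. Properness is immediate: the only edges of $c(G)$ are the edges $v_iv_j$ and $v_iu_j$ with $v_iv_j\in E(G)$ (whose endpoints then receive the distinct colours $\cC(v_i)\ne\cC(v_j)$) and the edges at $w$ (which carries the unique colour $c_{k+1}$); there are no $u_iu_j$ edges. For the rainbow condition, note that $N_{c(G)}[v_i]\supseteq N_G[v_i]\cup\{w\}$, that $N_{c(G)}[u_i]=\{u_i\}\cup\{v_j:v_iv_j\in E(G)\}\cup\{w\}$, and that $N_{c(G)}[w]$ contains every $v_i$. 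In each case the ``$G$-part'' already realises all of $c_1,\dots,c_k$ because $\cC$ is a $J$-colouring of $G$, and $w$ contributes $c_{k+1}$; hence $\cC^*$ witnesses $\J(c(G))\ge k+1$.

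For the upper bound, which I expect to be the crux, let $\cC'$ be any $J$-colouring of $c(G)$ using $m$ colours. Since $w$ is adjacent to all $2n$ remaining vertices, its colour, say $c_m$, is used by $w$ alone, so every $v_i,u_i$ is coloured from $\{c_1,\dots,c_{m-1}\}$. The key step is to show that $\cC'(u_i)=\cC'(v_i)$ for every $i$: applying the rainbow condition to $u_i$, whose closed neighbourhood in $c(G)$ is $\{u_i\}\cup\{v_j:v_iv_j\in E(G)\}\cup\{w\}$, forces the colours on $\{u_i\}\cup\{v_j:v_iv_j\in E(G)\}$ to be exactly $\{c_1,\dots,c_{m-1}\}$; but $v_i$ is adjacent in $G$ to every such $v_j$, so $\cC'(v_i)$ does not appear among the $\cC'(v_j)$, while $\cC'(v_i)\in\{c_1,\dots,c_{m-1}\}$, which leaves no option but $\cC'(v_i)=\cC'(u_i)$.

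Granting this twin-colour coincidence, the restriction $\cD=\cC'|_{\{v_1,\dots,v_n\}}$ is a proper colouring of $G$, and for each $i$ the equality above becomes: the colours on $N_G[v_i]=\{v_i\}\cup\{v_j:v_iv_j\in E(G)\}$ are exactly $\{c_1,\dots,c_{m-1}\}$. Thus every vertex of $G$ lies in a rainbow neighbourhood under $\cD$, and all $m-1$ colours actually occur, so $\cD$ is a colouring of $G$ with $m-1$ colours certifying $\J(G)\ge m-1$, i.e.\ $m\le\J(G)+1$. Combining the two bounds yields $\J(c(G))=\J(G)+1$. The only delicate point in the whole argument is the identity $\cC'(u_i)=\cC'(v_i)$; once it is in hand, the rest is bookkeeping with closed neighbourhoods.
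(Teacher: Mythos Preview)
Your argument is correct, and in fact it is more complete than the paper's own proof. Both you and the paper establish the lower bound $\J(c(G))\ge\J(G)+1$ by the same construction: colour each $u_i$ the same as its twin $v_i$ and give $w$ a fresh colour. The paper then simply asserts that this $(k+1)$-colouring is a $J$-colouring (i.e.\ maximal) and concludes $\J(c(G))=k+1$, without justifying why no larger number of colours can work.

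Your upper-bound argument fills exactly this gap, and by a genuinely different idea: you analyse an \emph{arbitrary} $J$-colouring $\cC'$ of $c(G)$, use the universality of $w$ to isolate its colour, and then exploit the rainbow condition at $u_i$ together with $N_{c(G)}[u_i]=\{u_i\}\cup\{v_j:v_iv_j\in E(G)\}\cup\{w\}$ to force $\cC'(u_i)=\cC'(v_i)$. This twin-colour coincidence is the real content; once it holds, the restriction to $V(G)$ is a proper colouring in which every $N_G[v_i]$ meets all $m-1$ non-root colours, so $\J(G)\ge m-1$. The paper never isolates this step (and, incidentally, writes $k=\chi(G)$ and ``$N(u_i)=N(v_i)$'' in its proof, both of which are inaccuracies). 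Your route is the right way to make the equality rigorous.
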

\begin{proof}
Assume that the graph $G$ under consideration admits a $J$-coloring, say $\cC=\{c_1,c_2,\ldots,c_k\}$, where $k=\chi(G)$, the chromatic number of $G$. While coloring the vertices of $c(G)$, we notice the following points:
\begin{enumerate}\itemsep0mm
\item[(i)] Since, the twin vertices $u_i$ and $v_i$ in $c(G)$ are adjacent to each other, both of them can have the same color. 
\item[(ii)] Since $N(u_i)=N(v_i)$ for all $1\le i\le n$, it follows that $N[u_i]$ is also a rainbow neighbourhood in $c(G)$. Therefore, the subgraph of $c(G)$ induced by the vertex set $\{v_1,v_2,\ldots, v_n,u_1,u_2,\ldots,u_n\}$ admit the same $J$-coloring $\cC$.
\item [(iii)] Since the root vertex $w$ is adjacent to other vertices in $c(G)$, it cannot have any color from $\cC$. Therefore, we need a new color, say $c_{k+1}$ to color the vertex $w$.
\item[(iv)] Since the root vertex $w$ is adjacent to other vertices in $c(G)$, it belongs to a rainbow neighbourhood in $c(G)$ and will not influence the belongingness of other vertices to some rainbow neighbourhoods in $c(G)$.
\end{enumerate}

In view of the conditions mentioned above, notice that $\cC\cup\{c_{k+1}\}$ is a $J$-coloring of $c(G)$ and $\J(c(G))=k+1=\J(G)+1$. This completes the proof.
\end{proof}

Another similar graph that catches attention in this context is the shadow graph of a graph $G$. The \textit{shadow graph} of a graph $G$, denoted by $s(G)$, is the graph $G$ is the graph obtained from its Mycielski graph $\mu(G)$ by removing the root vertex.

\noi The following theorem discusses the admissibility of a $J$-coloring by the shadow graph $s(G)$ of a $J$-colorable graph $G$. 

\begin{theorem}
The shadow graph $s(G)$ of a $J$-colorable graph $G$ is also $J$-colorable. Moreover, $\J(s(G))=\J(G)$.
\end{theorem}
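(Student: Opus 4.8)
The plan is to establish the two inequalities $\J(s(G))\ge \J(G)$ and $\J(s(G))\le \J(G)$ separately, in each case by transporting a $J$-colouring between $G$ and $s(G)$. I will repeatedly use that in $s(G)$ the twin vertices $u_1,\dots,u_n$ form an independent set and that $u_j$ is adjacent to $v_i$ if and only if $v_iv_j\in E(G)$; consequently, writing $N_G[\cdot]$ and $N_{s(G)}[\cdot]$ for closed neighbourhoods in $G$ and in $s(G)$, one has $N_{s(G)}[v_i]=N_G[v_i]\cup\{u_j:\, v_iv_j\in E(G)\}$ and $N_{s(G)}[u_j]=\{u_j\}\cup\{v_k:\, v_jv_k\in E(G)\}$ for all $i,j$.

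For $\J(s(G))\ge \J(G)$, I would take a $J$-colouring $\cC$ of $G$ with $k=\J(G)$ colours and extend it to $s(G)$ by assigning to each twin vertex $u_i$ the colour $\cC(v_i)$. This is a proper colouring of $s(G)$: two $v$-vertices are adjacent in $s(G)$ precisely when they are adjacent in $G$, the vertex $u_i$ is adjacent only to vertices $v_j$ with $v_iv_j\in E(G)$ (for which $\cC(v_i)\ne\cC(v_j)$), and the $u$-vertices are pairwise non-adjacent. By the two identities above, the set of colours appearing on $N_{s(G)}[v_i]$ and on $N_{s(G)}[u_i]$ equals, in both cases, the set of colours on $N_G[v_i]$, which is all of $\cC$ because $\cC$ is a $J$-colouring of $G$. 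Hence every vertex of $s(G)$ lies in a rainbow neighbourhood, so $s(G)$ is $J$-colourable and $\J(s(G))\ge k=\J(G)$.

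For $\J(s(G))\le \J(G)$, I would start from a $J$-colouring $\cC$ of $s(G)$ with $m=\J(s(G))$ colours $c_1,\dots,c_m$ and let $\cC'$ be its restriction to $\{v_1,\dots,v_n\}$; since these vertices induce a copy of $G$, $\cC'$ is a proper colouring of $G$. It suffices to show that $\cC'(N_G[v_i])=\{c_1,\dots,c_m\}$ for every $i$, for this simultaneously shows that $\cC'$ uses all $m$ colours and that every vertex of $G$ lies in a rainbow neighbourhood under $\cC'$, whence $\J(G)\ge m$. Assume towards a contradiction that a colour $b$ is missing from $\cC'(N_G[v_i])$. Since $N_{s(G)}[v_i]=N_G[v_i]\cup\{u_j:\, v_iv_j\in E(G)\}$ is a rainbow neighbourhood of $s(G)$, the colour $b$ must be carried by some $u_j$ with $v_iv_j\in E(G)$; then $v_j\in N_G[v_i]$, so $\cC(v_j)\ne b$. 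Applying the rainbow condition to $u_j$: since $N_{s(G)}[u_j]=\{u_j\}\cup\{v_k:\, v_jv_k\in E(G)\}$ and $u_j$ carries only the colour $b$, the colour set of $\{v_k:\, v_jv_k\in E(G)\}$ must contain $\{c_1,\dots,c_m\}\setminus\{b\}$; on the other hand each such $v_k$ is adjacent to $v_j$ in $G$, so by properness this colour set is contained in $\{c_1,\dots,c_m\}\setminus\{\cC(v_j)\}$. Being squeezed between these two $(m-1)$-element sets, that colour set equals both, so $\{c_1,\dots,c_m\}\setminus\{b\}=\{c_1,\dots,c_m\}\setminus\{\cC(v_j)\}$ and hence $\cC(v_j)=b$, contradicting $\cC(v_j)\ne b$.

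Putting the two inequalities together yields $\J(s(G))=\J(G)$. The first inequality is routine; the delicate point is the second, because a priori $\cC'$ need not even use all $m$ colours --- a colour of $\cC$ could occur only on the independent set $\{u_1,\dots,u_n\}$, and a twin vertex lying in $N_{s(G)}[v_i]$ may carry a colour absent from $N_G[v_i]$. The colour-chasing step --- tracing such a missing colour to the closed neighbourhood of the relevant twin $u_j$, which meets $\{v_1,\dots,v_n\}$ in exactly the $G$-neighbours of $v_j$, and then invoking properness of $\cC$ there --- is the part I expect to be the crux of the argument.
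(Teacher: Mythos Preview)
Your argument is correct. The lower bound $\J(s(G))\ge\J(G)$ is exactly what the paper has in mind: it simply says the proof is immediate from that of Theorem~\ref{Thm-3.2}, whose relevant content is precisely that assigning each twin $u_i$ the colour of $v_i$ yields a proper colouring in which every vertex retains a rainbow neighbourhood (points (i) and (ii) there, with the root-vertex considerations (iii)--(iv) discarded).

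Where you genuinely go beyond the paper is the upper bound $\J(s(G))\le\J(G)$. The paper does not argue maximality for $s(G)$ at all; Theorem~\ref{Thm-3.2} itself only exhibits a colouring and asserts it is a $J$-colouring, and the one-line deferral here inherits that gap. Your colour-chasing step---passing from a colour $b$ missing on $N_G[v_i]$ to a twin $u_j$ carrying $b$, and then squeezing the colour set of $N_G(v_j)$ between $\{c_1,\dots,c_m\}\setminus\{b\}$ and $\{c_1,\dots,c_m\}\setminus\{\cC(v_j)\}$ to force $\cC(v_j)=b$---is a clean and complete justification of the part the paper leaves implicit. Note that the generic bound $\J(H)\le\delta(H)+1$ quoted elsewhere in the paper would only give $\J(s(G))\le\delta(G)+1$ (since $\deg_{s(G)}(u_j)=d_G(v_j)$), which is weaker than $\J(G)$ in general; so your direct restriction argument is actually needed.
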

\begin{proof}
The proof is immediate from the proof of Theorem \ref{Thm-3.2}.
\end{proof} 

Next, we construct a new graph $\f(G)$ from a triangle-free, simple and connected graph $G$ such that $\f(G)$ has $J$-chromatic number $k+1$ when $G$ has $J$-chromatic number $k$. The construction is described below.

\begin{definition}{\rm 
Let $G$ be a triangle-fee graph, with $V(G)=\{v_1,\ldots, v_n\}$. We define the \emph{Federico graph} $\f(G)$ of $G$ as the graph such that $V(\f(G))=\{v_1,v_2\ldots, v_n, u_1,u_2\ldots, \\ u_n, w_1,w_2\ldots, w_n\}$ and with edges that follows the rules:
\begin{enumerate}\itemsep0mm
\item[(i)] $v_iv_j\in E(\f(G))\iff v_iv_j\in E(G)$
\item[(ii)] $w_iw_j\in E(\f(G))\iff v_iv_j\in E(G)$
\item[(iii)] $u_iw_j\in E(\f(G))\iff v_iv_j\in E(G)$
\item[(iv)] for all $i=1,\ldots, n$, $v_iu_i\in V(\f(G))$
\end{enumerate}
}\end{definition}

The following figure illustrates the Federico graph of the graph $P_5$. 

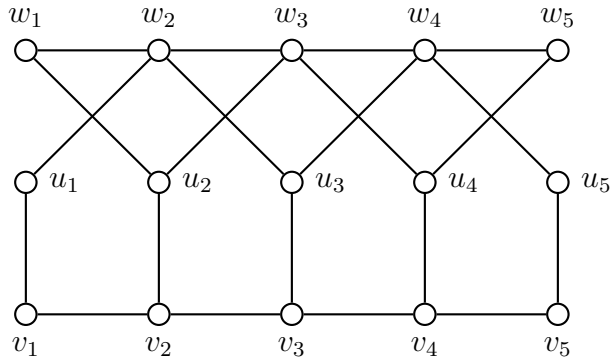
\begin{figure}[h!]
\centering
\begin{tikzpicture}[auto,node distance=1.75cm,
thick,main node/.style={circle,draw,font=\sffamily\Large\bfseries}]

\vertex [label=below:$v_1$] (1) {};
\vertex [label=below:$v_2$] (2) [right of=1] {};
\vertex [label=below:$v_3$] (3) [right of=2] {};
\vertex [label=below:$v_4$] (4) [right of=3] {};
\vertex [label=below:$v_5$] (5) [right of=4] {};
\vertex [label=right:$u_1$] (6) [above of=1] {};
\vertex [label=right:$u_2$] (7) [above of=2] {};
\vertex [label=right:$u_3$] (8) [above of=3] {};
\vertex [label=right:$u_4$] (9) [above of=4] {};
\vertex [label=right:$u_5$] (10) [above of=5] {};
\vertex [label=above:$w_1$] (11) [above of=6] {};
\vertex [label=above:$w_2$] (12) [above of=7] {};
\vertex [label=above:$w_3$] (13) [above of=8] {};
\vertex [label=above:$w_4$] (14) [above of=9] {};
\vertex [label=above:$w_5$] (15) [above of=10] {};
\path[every node/.style={font=\sffamily\small}]
(1) edge node {} (2)
edge node {} (6)
(2) edge node {} (3)
edge node {} (7)
(3) edge node {} (4)
edge node {} (8)
(4) edge node {} (5)
edge node {} (9)
(5) edge node {} (10)
(11) edge node {} (12)
edge node {} (7)
(12) edge node {} (13)
edge node {} (6)
edge node {} (8)
(13) edge node {} (14)
edge node {} (7)
edge node {} (9)
(14) edge node {} (15)
edge node {} (8)
edge node {} (10)
(15) edge node {} (9);
\end{tikzpicture}
\caption{The Federico Graph $\f(P_5)$}
\end{figure}

First we can note that the graph $\f(G)$ has no pendant vertices and so the $J$-colouring od $\f(G)$, if exists, coincides to the $J^*$-colouring. This fact is straight forward.

\begin{theorem}\label{teoM}
Let $G$ be a $J$-colourable, triangle-free graph of order $n$ with $J$-colouring number $k$. Then the graph $\f(G)$ is triangle-free and with higher $J$-colouring number. If $\J(G)=k$, then $\J(\f(G))=k+1$.
\end{theorem}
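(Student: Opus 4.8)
The plan is to mimic the structure of the proof of Theorem~\ref{Thm-3.2}, but being careful about two new features of $\f(G)$: first, the $w_i$'s are no longer a single root vertex but a whole copy of $G$ joined to the $u_i$'s in the Mycielski fashion; second, the $u_i$'s now have \emph{two} kinds of neighbours (their twin $v_i$, and all the $w_j$ with $v_iv_j\in E(G)$). So I would start by recording the triangle-freeness claim, which is quick: a triangle in $\f(G)$ would have to use edges of only types (i)--(iv); types (i) and (ii) are copies of $E(G)$ and $G$ is triangle-free, type (iv) edges $v_iu_i$ are a perfect matching between the $v$-part and $u$-part, and the only edges incident to a $u_i$ besides its matching edge are the $u_iw_j$ edges, none of which close a triangle because $u_i$ is adjacent to no other $u$, to no $v_j$ with $j\neq i$, and the $v_iu_i$ edge cannot be completed since $v_i$ is adjacent to no $w_j$. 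Hence $\f(G)$ is triangle-free.

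For the coloring, fix a $J$-colouring $\cC=\{c_1,\dots,c_k\}$ of $G$ with $k=\chi(G)=\J(G)$. I would colour the $v$-copy and the $w$-copy of $G$ each by the given $J$-colouring (so $v_i$ and $w_i$ both get the colour of $v_i$ in $G$), introduce one brand-new colour $c_{k+1}$, and colour every $u_i$ with $c_{k+1}$. This is a proper colouring: the $v$-part and $w$-part inherit propriety from $\cC$; the edges $u_iw_j$ are fine since $u_i$ has the fresh colour; and the matching edges $v_iu_i$ are fine for the same reason. Now I check that every vertex lies in a rainbow neighbourhood. A vertex $w_i$: its closed neighbourhood contains $w_i$ together with all $w_j$ ($v_iv_j\in E(G)$) and all $u_j$ ($v_iv_j\in E(G)$); since $v_i$ was in a rainbow neighbourhood of $G$, the set $\{w_i\}\cup\{w_j: v_iv_j\in E(G)\}$ already hits all of $c_1,\dots,c_k$, and any one adjacent $u_j$ supplies $c_{k+1}$ (if $v_i$ has a neighbour in $G$; if $G=K_1$ the statement is trivial/degenerate, so assume $n\ge 2$ and $G$ connected means every $v_i$ has a neighbour). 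A vertex $u_i$: $N[u_i]=\{u_i,v_i\}\cup\{w_j: v_iv_j\in E(G)\}$; the $w_j$'s here are exactly the colours of the $G$-neighbours of $v_i$, which — since $v_i$ is in a rainbow neighbourhood of $G$ — together with the colour of $v_i$ itself cover $c_1,\dots,c_k$, and $u_i$ itself supplies $c_{k+1}$. Finally a vertex $v_i$: $N[v_i]=\{v_i,u_i\}\cup\{v_j: v_iv_j\in E(G)\}$; the $v_j$-part plus $v_i$ cover $c_1,\dots,c_k$ as before, and $u_i$ supplies $c_{k+1}$. So $\cC\cup\{c_{k+1}\}$ is a rainbow-neighbourhood colouring of $\f(G)$ with $k+1$ colours, giving $\J(\f(G))\ge k+1$.

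For the upper bound $\J(\f(G))\le k+1$ I would argue that a $J$-colouring is in particular a proper colouring in which each vertex's closed neighbourhood meets every colour class, so the number of colours is at most $1+\min_v \deg(v)$ in general, but more usefully: the $v$-subgraph of $\f(G)$ is an isomorphic copy of $G$, and I claim a $J$-colouring of $\f(G)$ cannot use more than $\J(G)+1$ colours because restricting attention to how colours appear on $N_{\f(G)}[v_i] = N_G[v_i]\cup\{u_i\}$ forces all but at most one colour (the one possibly appearing only on $u_i$) to be realizable inside a closed neighbourhood of $G$ itself — and $\J(G)=k$ is by definition the maximal such number. Making this last sentence fully rigorous is the step I expect to be the main obstacle: one has to rule out that the extra structure of the $u$- and $w$-copies lets $\f(G)$ support a rainbow-neighbourhood colouring with more than $k+1$ colours in a way that does not descend to $G$. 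I would handle it by taking an arbitrary $J$-colouring of $\f(G)$, looking at the colours on the $v$-part: if $v_i$ is rainbow in $\f(G)$ then $N_G[v_i]\cup\{u_i\}$ meets every colour class, so $N_G[v_i]$ meets every colour class except possibly the single colour of $u_i$; letting the $v$-part of the colouring be $\cC'$ and recolouring each $v_i$ that was coloured like some $u_j$ is delicate, so instead I would observe that the subgraph induced by the $v$-part with the colour of each $u_i$ ``charged'' to $v_i$ as an auxiliary exterior colour gives a structure witnessing that $G$ admits a rainbow-neighbourhood colouring with (number of colours on the $v$-part) colours, whence that number is $\le k$, and at most one further colour ($c_{k+1}$-type) can appear on the $u$'s and $w$'s off the $v$-part without violating rainbow-ness at some $v_i$; combining gives $\le k+1$. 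If this counting turns out to have a gap, the fallback is to invoke directly that $\f(G)$ contains an induced copy of $G$ on which rainbow neighbourhoods in $\f(G)$ restrict to rainbow neighbourhoods using at most one extra colour, together with Theorem~\ref{Thm-3.2}-style bookkeeping, to pin the value at exactly $k+1$.
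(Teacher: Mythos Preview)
Your triangle-freeness argument and the explicit $(k+1)$-colouring (put $\varphi^*(v_i)=\varphi^*(w_i)$ equal to the colour of $v_i$ in $G$ and $\varphi^*(u_i)=c_{k+1}$) are exactly what the paper does, and your verification that each of $v_i$, $u_i$, $w_i$ lies in a rainbow neighbourhood is, if anything, spelled out more carefully than in the paper. So for the lower bound $\J(\f(G))\ge k+1$ you are in full agreement with the source.

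The divergence is at the upper bound $\J(\f(G))\le k+1$. The paper does not attempt your restriction-to-the-$v$-copy argument at all; instead it simply invokes the general inequality $\J(H)\le\delta(H)+1$ from \cite{NKS} together with the observation $\delta(\f(G))=\delta(G)+1$ (since $\deg_{\f(G)}(v_i)=\deg_{\f(G)}(u_i)=\deg_G(v_i)+1$), and concludes in one line that any $J$-colouring of $\f(G)$ can use at most one colour more than a $J$-colouring of $G$. This is far shorter than what you sketch, but it shares exactly the gap you were worried about: the degree bound yields only $\J(\f(G))\le\delta(G)+2$, which coincides with $k+1$ only under the extra hypothesis $\J(G)=\delta(G)+1$. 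For $J$-colourable triangle-free graphs with $\J(G)<\delta(G)+1$ (e.g.\ $G=C_4$, where $\J(C_4)=2$ but $\delta(C_4)+1=3$), neither your restriction sketch nor the paper's minimum-degree shortcut actually pins the value at $k+1$. So your instinct that maximality is the real obstacle is correct; the paper handles it via the $\delta+1$ bound rather than by a structural argument, and that shortcut tacitly needs $k=\delta(G)+1$ to close.
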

\begin{proof}
First of all we can see that no pair of vertices $u_i$ is connected, therefore no triangle can involve a pair of these vertices. Also, no vertex $w_i$ is connected to a vertex $v_j$.

Remembering that $G$ is triangle-free, it is not possible that three vertices $v_i$ are connected in $\f(G)$ too. Similarly for the vertices $w_i$ that form between them a copy of the graph $G$. Hence, we have only two possibilities left:
\begin{enumerate}\itemsep0mm
\item[(i)] if $v_i$ is connected to $v_j$ we have that $u_i$ is connected to $v_i$ but not to $v_j$, by construction, so no triangle of this type is involved. 
\item[(ii)] if $w_i$ is connected to $w_j$ we have that $u_i$ is connected to $w_j$ but not to $w_i$, so it is proved that $\f(G)$ is triangle-free. 
\end{enumerate}

To construct a proper $J$-colouring on $\f(G)$, let consider a proper $J$-colouring $\varphi:V(G)\rightarrow\{c_1,\ldots, c_k\}$ and let us construct $\varphi^*:V(\f(G))\rightarrow\{c_1,\ldots, c_k, c_{k+1}\}$ by setting:
\begin{enumerate}\itemsep0mm
\item[(i)] $\varphi^*(v_i)=f^*(w_i)=f(v_i)$ for all $i=1,\ldots, n$
\item[(ii)] $\varphi^*(u_i)=c_{k+1}$ for all $i=1,\ldots, n$
\end{enumerate}

First, we have to prove that it is a proper $J$-colouring of $\f(G)$. We note that every vertex $v_i$ has a rainbow neighbourhood in $G$, and hence it has in $\mu(G)$ with one more colour (the colour of $u_i$). Every $w_i$ has the same rainbow of the twin $v_i$ because it is connected with the same vertices connected to $v_i$, and it is connected at least to one of the vertex $u_j$. Finally, every $u_i$ has a $k+1$ rainbow neighbourhood of because it is connected with every $v_i$ and with every $w_j$ that are the connections of $v_i$ in the original graph, so by the definition of $\varphi^*$, every $u_i$ has the same rainbow neighbourhood of $v_i$. Hence, this colouring define a proper $J$-colouring of $G$, it remains  to prove that this colouring is maximal.

Hence, let us assume that there exists a proper $J$-colouring of $\f(G)$ such that $\J(\f(G))=2$. In this case, we can assume that not every $u_i$ has the same colour because if not every $u_i$ is connected only to $v_i$, and every $v_i$ has a rainbow neighbourhood of order $k+2$ and can't have the same colour of the vertices $u_i$. But it would mean that the graph $G$ was $(k+1)-J-$colouring.

Hence, let us start considering the vertex $u_i$. If we prove that independent from the choice of the colour of $u_i$, it is necessary that every $u_i$ has the same colour, for what we have just proved, it follows that the coloring is maximal.

From the above choice of the coloring assignment $\varphi^*$, we note that $\f(G)$ requires at least one more color in its proper coloring than the corresponding proper coloring of the graph $G$. Now, note that the upper bound for the $J$-chromatic number of a graph $G$ is $\delta(G)+1$ (see \cite{NKS}). Since $\delta(\f(G))=\delta(G)+1$, any $J$-coloring of $\f(G)$ can have at most one more color than the $J$-coloring of $G$. From these two conditions, we can conclude that the coloring $\varphi^*$ defined above is a maximal coloring of $\f(G)$ such that every vertex of $\f(G)$ belongs to some rainbow neighbourhood of $\f(G)$. Then, we have $\J(\f(G))=\J(G)+1$, completing the proof. 
\end{proof}

Hence, we have found an interesting construction to have new triangle free graphs with higher $J$-colouring number. In the following theorem, we study what happens to the chromatic number of a Federico graph.

\begin{theorem}
Let $G$ be a graph and $\f(G)$ its Federico graph. Then, $\chi(G)=\chi(\f(G))$
\end{theorem}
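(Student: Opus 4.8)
The plan is to prove the two inequalities $\chi(\f(G))\le\chi(G)$ and $\chi(\f(G))\ge\chi(G)$ separately. The second one is immediate: by construction rule (i), the subgraph of $\f(G)$ induced by $\{v_1,\ldots,v_n\}$ is an exact copy of $G$, so any proper coloring of $\f(G)$ restricts to a proper coloring of $G$, whence $\chi(\f(G))\ge\chi(G)$.

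For the upper bound, I would exhibit an explicit proper coloring of $\f(G)$ using exactly $\chi(G)$ colors. Let $\psi:V(G)\to\{c_1,\ldots,c_k\}$ be an optimal proper coloring of $G$, with $k=\chi(G)$. Define a coloring $\Psi$ on $\f(G)$ by $\Psi(v_i)=\Psi(w_i)=\psi(v_i)$ for all $i$, and $\Psi(u_i)=$ any color in $\{c_1,\ldots,c_k\}$ different from $\psi(v_i)$ (such a color exists as soon as $k\ge 2$; the case $k=1$ means $G$ has no edges, so $\f(G)$ also has no edges except the $v_iu_i$ pairs, and then $\chi(\f(G))=2=\chi(G)+1$ — wait, this needs care, see below). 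The key step is then to check properness edge by edge against the four construction rules: edges $v_iv_j$ are fine since $\psi$ is proper; edges $w_iw_j$ are fine for the same reason; edges $v_iu_i$ are fine since $\Psi(u_i)\ne\psi(v_i)$ by choice; the only delicate family is the edges $u_iw_j$ with $v_iv_j\in E(G)$, where we need $\Psi(u_i)\ne\Psi(w_j)=\psi(v_j)$.

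The main obstacle is exactly this last family of constraints: the color assigned to $u_i$ must avoid not only $\psi(v_i)$ but also $\psi(v_j)$ for every neighbour $v_j$ of $v_i$ in $G$. In other words, $u_i$ must receive a color not appearing in the closed neighbourhood $\psi(N_G[v_i])$. If $v_i$ has degree $\ge k-1$ in $G$ this set may already exhaust all $k$ colors, and no valid choice remains. So the naive construction fails, and I would instead argue more cleverly: since $\psi$ restricted to $N_G[v_i]$ uses at most $\deg_G(v_i)+1$ colors, there is a free color for $u_i$ whenever $\deg_G(v_i)+1<k$. This suggests the clean statement is $\chi(\f(G))\le\max\{\chi(G),\Delta(G)+2\}$ rather than $\chi(G)$ in general, and the claimed equality $\chi(G)=\chi(\f(G))$ holds precisely when $\chi(G)\ge\Delta(G)+2$, which for a triangle-free $G$ with at least one edge is actually false in general (e.g.\ $G=K_2$ gives $\f(G)$ containing a $C_5$-type obstruction).

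Given this tension, I suspect the intended argument is different: color each $u_i$ with the color $\psi(v_\ell)$ of some vertex $v_\ell$ that is \emph{not} adjacent to $v_i$ and whose color differs from all of $v_i$'s neighbours — or, more likely, the authors intend a recoloring of the $w$-layer. A workable fix is to color the $w_i$'s by a \emph{shifted} copy of $\psi$, say $\Psi(w_i)=\sigma(\psi(v_i))$ for a fixed cyclic permutation $\sigma$ of the color set, so that the $u$-layer (now adjacent, via rule (iii), only to shifted colors) has room; then set $\Psi(u_i)=\psi(v_i)$, which is legal for the $v_iu_i$ edges only if $\psi(v_i)\ne\psi(v_i)$ — false again. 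So honestly I expect the proof as written to contain a gap, and the most defensible route I would actually take is: prove $\chi(\f(G))\ge\chi(G)$ as above; prove $\chi(\f(G))\le\chi(G)+1$ by the coloring in the proof of Theorem~\ref{teoM} (coloring all $u_i$ with a single new color $c_{k+1}$); and then show that when $\chi(G)\ge 3$ one can recycle color $c_{k+1}$ back into $\{c_1,\ldots,c_k\}$ on the independent set $\{u_1,\ldots,u_n\}$ using a greedy/Brooks-type argument on the auxiliary graph whose vertices are the $u_i$ and whose "forbidden lists" are $\psi(N_G[v_i])$, which has size $\le\Delta(G)+1\le$ something controllable — the main obstacle being to justify that these lists never simultaneously block all $k$ colors, which is where the triangle-free hypothesis and the relation between $\chi(G)$, $\Delta(G)$ must be invoked carefully.
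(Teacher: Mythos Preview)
Your proposal does not reach a proof. You correctly establish the easy inequality $\chi(\f(G))\ge\chi(G)$ and correctly diagnose why the naive attempt (copying $\psi$ on both the $v$- and $w$-layers and choosing some different color for each $u_i$) fails: $u_i$ is adjacent to $w_j$ for every $j\in N_G(i)$, so the color of $u_i$ must avoid the whole closed-neighbourhood palette $\psi(N_G[v_i])$. But you abandon the cyclic-shift idea one step too early, and the Brooks-type salvage you sketch at the end is neither needed nor carried out.

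The paper's construction is exactly a clean execution of the shift idea, applied to the right layer. Given a proper $k$-coloring $f$ of $G$, set
\[
g(u_i)=g(w_i)=f(v_i),\qquad g(v_i)=\sigma(f(v_i)),
\]
where $\sigma$ is the cyclic shift $c_h\mapsto c_{h+1}$ (indices mod $k$). The point you missed is that $u_i$ and $w_i$ should receive the \emph{same} color. Then the edges $u_iw_j$ (present iff $v_iv_j\in E(G)$) require $f(v_i)\ne f(v_j)$, which is precisely properness of $f$; the edges $w_iw_j$ are proper for the same reason; the edges $v_iv_j$ are proper because $\sigma$ is a bijection; and the edge $v_iu_i$ is handled because $\sigma(f(v_i))\ne f(v_i)$ once $k\ge 2$. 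Your variant---shifting the $w$-layer while setting $\Psi(u_i)=\psi(v_i)$---fails on $v_iu_i$, as you observed; had you also shifted $u_i$ together with $w_i$ (or, equivalently, shifted only the $v$-layer, as the paper does), the argument would have gone through immediately. Your suspicion that the paper's proof ``contains a gap'' is therefore unfounded for $k\ge 2$. (Your aside about $k=1$ is a genuine boundary case the paper does not address: when $G$ is edgeless the $v_iu_i$ edges force $\chi(\f(G))=2>1=\chi(G)$.)
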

\begin{proof}
Let $f:V(G)=\{v_1,\ldots, v_n\}\rightarrow {c_1,\ldots, c_k}$ be a coloring of the vertices of $G$. Let us consider the coloring $g:V(\f(G))=\{v_1,\ldots, v_n, u_1,\ldots, u_n, w_1,\ldots, w_n\}\rightarrow\{c_1, \ldots, c_k\}$ defined by:

\begin{enumerate}\itemsep0mm
\item[(i)] $g(u_i)=g(w_i)=f(v_i)$ for all $i=1,\ldots, n$.
\item[(ii)] if $f(v_i)=c_h$ then $g(v_i)=c_{h+1}$ for all $i=1,\ldots, n$ and $h=1,\ldots, k-1$
\item[(iii)] if $f(v_i)=c_k$ then $g(v_i)=c_1$ for all $i=1,\ldots, n$.
\end{enumerate}

To prove that it is a proper coloring, first we can note that it is a proper coloring on the vertices $v_i$ because $f$ was a proper coloring of $G$ and we have only permutated the colours, and also it is a proper coloring on the vertices $w_i$ because it is a copy of the graph and we have coloured in the same way. So it only left to see that we cannot have the same colour with connections with a vertex $u_i$.

But, $v_i$ is only connected to $u_i$ and they have different colours because $g(u_i)=f(v_i)$ but $c_{h+1}=g(v_i)\not= f(v_i)=c_h$ for the definition of $g$. Also, because each $w_i$ is connected to every $u_j$ such that $v_j\in V(G)$ was connected to $v_i\in V(G)$ and none of which has the same colour $g(u_i)=f(u_i)$ no conflicts arise here.

Hence, we have constructed a proper coloring of $\f(G)$ with the same number of colours of $G$, as claimed. 
\end{proof}

Now we want to study another important colouring property of the Modified Mycielski graph, the \textit{circular chromatic number}. It was first studied in \cite{AV} with the name of star chromatic number, and later in \cite{XZ} provided a comprehensive survey. 

Let $G$ be a graph. For two positive numbers $k,d$ with $k\geq 2d$, we define a $(k,d)$-colouring as the function $f:V(G)\rightarrow \{0,1,\ldots, k-1\}$ such that if two vertices $u,v$ are adjacent, then $|f(u)-f(v)|_k\leq d$ where $|a-b|_k=\mathrm{min}\{|a-b|, k-|a-b|\}$. Then, the \textit{circular chromatic number} of $G$ is defined as $$\chi_c(G):={\rm inf}\left\{\frac{k}{d}\mid G\,\, \mathrm{has}\,\, \mathrm{a}\,\,(k,d)-\mathrm{coloring} \right\}$$

In \cite{XZ} it is shown that if the graph $G$ has at least one edge, then the infimum can be replaced with the minimum and we have $\chi(G)-1\leq \chi_c(G)\leq \chi(G)$.

The circular chromatic number is hard to compute in Mycielski graphs and there's not yet a general formula that compute $\chi_c(\mu(G))$ knowing the circular chromatic number of $G$. But, in the case of Federico graph, we have 

\begin{theorem}
Let $G$ be a graph with $\chi_c(G)$. Then, $\chi_c(\f(G))=\chi_c(G)$.
\end{theorem}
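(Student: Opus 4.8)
The plan is to prove the two inequalities $\chi_c(\f(G))\ge\chi_c(G)$ and $\chi_c(\f(G))\le\chi_c(G)$ separately. The first is immediate: by rule~(i) the subgraph of $\f(G)$ induced by $\{v_1,\dots,v_n\}$ is a copy of $G$, and the circular chromatic number cannot increase when passing to a subgraph (the restriction of any $(k,d)$-colouring of $\f(G)$ to the $v_i$'s is a $(k,d)$-colouring of $G$), so $\chi_c(G)\le\chi_c(\f(G))$.

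For the reverse inequality I would show that every $(k,d)$-colouring of $G$ extends to a $(k,d)$-colouring of $\f(G)$ on the same colour set; then $\chi_c(\f(G))\le k/d$ for every admissible pair $(k,d)$, i.e.\ $\chi_c(\f(G))\le\chi_c(G)$. So fix a $(k,d)$-colouring $f\colon V(G)\to\{0,1,\dots,k-1\}$ (with $k\ge 2d$), so that $|f(v_i)-f(v_j)|_k\ge d$ whenever $v_iv_j\in E(G)$. The one genuinely non-obvious point is that copying $f$ verbatim onto the $w$-layer does \emph{not} work --- already for $G=C_5$ the vertex $u_i$ would then have no admissible colour --- so instead I would \emph{rigidly shift} the $u$- and $w$-layers: define $f^*\colon V(\f(G))\to\{0,\dots,k-1\}$ by $f^*(v_i)=f(v_i)$ and $f^*(u_i)=f^*(w_i)=f(v_i)+d \pmod k$.

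It then remains to verify the colouring condition on the four kinds of edges of $\f(G)$, which is a short case check: the edges among the $v_i$'s and the edges among the $w_i$'s inherit $|f^*(x)-f^*(y)|_k=|f(v_i)-f(v_j)|_k\ge d$ from $f$ (rules (i), (ii)); each cross edge $u_iw_j$ exists precisely when $v_iv_j\in E(G)$, and the common shift by $d$ cancels, giving $|f^*(u_i)-f^*(w_j)|_k=|f(v_i)-f(v_j)|_k\ge d$ (rule (iii)); and each twin edge $v_iu_i$ gives $|f^*(v_i)-f^*(u_i)|_k=|d|_k=\min\{d,k-d\}=d$, using $k\ge 2d$ (rule (iv)). Hence $f^*$ is a $(k,d)$-colouring of $\f(G)$.

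Putting the two bounds together yields $\chi_c(\f(G))=\chi_c(G)$. I do not expect any real obstacle here beyond spotting the shift; the remaining verification is routine, and --- just as for the earlier theorems on $\f(G)$ --- triangle-freeness of $G$ plays no role in the argument, being needed only for $\f(G)$ to be defined.
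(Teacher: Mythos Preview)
Your proof is correct and essentially the same as the paper's: both extend a $(k,d)$-colouring $f$ of $G$ to $\f(G)$ by keeping $f$ on the $v_i$'s and rigidly shifting the $u$- and $w$-layers by $d$ (the paper shifts by $-d$, you by $+d$, which is immaterial), then checking the four edge types. You are in fact slightly more careful, making the lower bound $\chi_c(\f(G))\ge\chi_c(G)$ explicit via the induced copy of $G$ and verifying $|d|_k=d$ from $k\ge 2d$, whereas the paper leaves both implicit.
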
  
\begin{proof}
Let $G$ be a graph with a $(k,d)$ coloring $f$ over $V(G)=\{v_1,v_2 \ldots, v_n\}$. Then, we construct the coloring $f^*$ over $ \f(G)$ as follows:
\begin{enumerate}\itemsep0mm
\item[(i)] $f^*(v_i)=f(v_i)$ for all $i=1,\ldots, n$
\item[(ii)] $f^*(u_i)=f^*(w_i)=f(v_i)-d\, \mathrm{mod}\,k$ for all $i=1,\ldots, n$
\end{enumerate}
To see that it's a proper $(k,d)$ coloring of $\f(G)$ we first note that between it's a proper coloring over the vertices $v_i$'s (because it was on $G$) and over the vertices $w_i$'s (because in the construction we have simply added the distance modulo $k$, and their connections are the same than the connections over the vertices $v_i$). A vertex $v_i$ is adjacent only to the vertex $u_i$ and so by construction it has exactly distance $d$.

The vertex $u_i$ is connected to every vertex $w_j$ such that $v_iv_j$ is an edge in $G$. But for construction the vertex $u_i$ has colour $f(v_i)-d\, \mathrm{mod}\,k$ and the vertices $w_j$ have colour $f(v_j)-d\, \mathrm{mod}\,k$, so the connection maintain the same distances over the edges $v_iv_j\in E(G)$. Hence, it is a proper $(k,d)-$colouring of $\f(G)$ and if $\frac{k}{d}$ is minimal in $G$ so it's minimal in $\f(G)$.
\end{proof}

\section{$J$-Paucity Number of Graphs}

In view of our results on the absence of $J$-coloring for Mycielski graphs and our new constructions from the Mycielski graphs which admit $J$-colorings, we define a new graph parameter with respect to $J$-coloring as follows:

\begin{definition}{\rm 
Let $G$ be a graph which does not admit a $J$-coloring. Then, the \textit{$J$-paucity number} of $G$, denoted by $\varrho(G)$, is defined as the minimum number of edges to be added to $G$ so that the reduced graph becomes $J$-colorable with respect to a $(\delta(G)+1)$-coloring of $G$.
}\end{definition}

\noi In the following theorem, we determine the $J$-paucity number of paths.

\begin{theorem}
$\varrho(\mu(P_n))=n$.
\end{theorem}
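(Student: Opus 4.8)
The plan is to prove both inequalities $\varrho(\mu(P_n)) \le n$ and $\varrho(\mu(P_n)) \ge n$. Recall that $\mu(P_n)$ has vertex set $\{v_1,\dots,v_n,u_1,\dots,u_n,w\}$, that $\delta(\mu(P_n)) = \deg(u_1) = 2$ (the twin of an endpoint $v_1$ is adjacent only to $v_2$ and $w$), so we are aiming for a graph that is $J$-colourable with a $3$-colouring. The strategy for the upper bound is to exhibit an explicit set of $n$ edges whose addition makes $\mu(P_n)$ admit a $J$-colouring with $3$ colours, and the strategy for the lower bound is a counting/obstruction argument showing no smaller edge set can work.

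For the upper bound I would proceed as follows. Colour $w$ with $c_1$, colour the path $v_1,\dots,v_n$ periodically (say $v_i$ gets $c_2$ if $i$ odd, $c_3$ if $i$ even — this is the standard $J$-colouring of $P_n$, which exists), and colour each twin $u_i$ with the colour of $v_i$ (legal since $u_iv_i \notin E(\mu(G))$ and $u_i$ is adjacent only to neighbours of $v_i$ in the path plus $w$; we need $u_i$'s colour to differ from $c_1$, which holds, and from the path-neighbours of $v_i$, which holds since $u_i$ copies $v_i$). Now the vertices that fail to be in a rainbow neighbourhood are exactly those missing some colour class in their closed neighbourhood: one checks that each $v_i$ already sees $c_1$? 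No — $v_i$ is not adjacent to $w$ in $\mu(G)$, so $v_i$ never sees $c_1$; similarly $u_i$ sees $c_1$ (via $w$) but may miss one of $c_2,c_3$ at the endpoints. The fix is to add, for each $i=1,\dots,n$, one carefully chosen edge — the natural candidate is an edge from $v_i$ to $w$ for every $i$ (that is exactly $n$ edges), which instantly puts every $v_i$ into a rainbow neighbourhood; then one separately checks the finitely many $u_i$ at the ends of the path still see all three colours (the endpoint twins need attention, and adding the $v_i w$ edges may be re-used or a small local adjustment of the path colouring near the ends suffices). So the upper-bound construction is: add the $n$ edges $\{v_i w : 1 \le i \le n\}$, i.e.\ turn $\mu(P_n)$ into $c(P_n)$, and invoke Theorem~\ref{Thm-3.2}, which already tells us $c(P_n)$ is $J$-colourable with $\J(P_n)+1 = 3$ colours. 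That gives $\varrho(\mu(P_n)) \le n$ cleanly, with no case analysis at all.

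For the lower bound — which I expect to be the main obstacle — I would argue that at least $n$ edges are necessary. The key observation is that in $\mu(P_n)$ the vertex $w$ is not adjacent to any $v_i$, and in any proper $3$-colouring $w$ must receive some colour, say $c_1$; then for $v_i$ to lie in a rainbow neighbourhood of the augmented graph, the closed neighbourhood $N[v_i]$ must contain a vertex of colour $c_1$. In $\mu(P_n)$, the neighbours of $v_i$ are $v_{i-1}, v_{i+1}$ (path) and $u_{i-1}, u_{i+1}$ (Mycielski edges), none of which is $w$. One shows that with only $3$ colours available and $w$ fixed at $c_1$, the "demand" for colour $c_1$ in each $N[v_i]$ cannot be met for all $i$ simultaneously unless we add edges; more precisely, I would show that each added edge can only repair a bounded number (ideally: at most one) of the $n$ vertices $v_i$ that are "deficient", forcing at least $n$ additions. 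The careful part is ruling out an edge that fixes two deficient vertices at once: an edge between two $v_i$'s of the same deficiency type, or an edge $v_iv_j$ where both endpoints needed colour $c_1$ — but such an edge cannot supply $c_1$ to either endpoint. One must also handle edges incident to $u_i$'s or creating new shared structure; the cleanest route is to define the deficiency set $D = \{v_i : N[v_i] \text{ lacks } c_1\}$, note $|D|$ is roughly $n$ in the base colouring, and show by a matching/covering argument that covering all of $D$ costs at least $|D| \ge n$ edges, then optimise over the initial $3$-colouring to confirm the bound is exactly $n$. Combining the two bounds yields $\varrho(\mu(P_n)) = n$.
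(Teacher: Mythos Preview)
Your upper-bound construction is exactly the paper's: add the $n$ edges $v_iw$ (turning $\mu(P_n)$ into the crib graph $c(P_n)$) and check that the resulting $3$-colouring has every vertex in a rainbow neighbourhood. Your observation that this follows immediately from Theorem~\ref{Thm-3.2} is in fact cleaner than the paper's argument, which repeats the verification by hand.

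On the lower bound, you correctly identify it as the substantive part and sketch a deficiency-counting argument. Here you actually go further than the paper: the paper's proof fixes \emph{one} particular $3$-colouring (the $v_i$ alternating $c_1,c_2$, the $u_i$ copying their twins, $w$ getting $c_3$), observes that under \emph{that} colouring the $n$ vertices $v_i$ are exactly the deficient ones, and then simply asserts that ``what we need is to draw edges from the vertices $v_i$ to the vertex $w$''. It does not consider other $3$-colourings of the augmented graph, nor other edge sets (e.g.\ an edge $v_iu_j$ with $|i-j|\ge 2$, or edges among the $v_i$ that might simultaneously repair several neighbourhoods), so the paper does not actually establish $\varrho(\mu(P_n))\ge n$. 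Your sketch acknowledges these issues (you explicitly worry about an added edge repairing two deficient vertices at once, and about optimising over colourings) but does not resolve them either; in particular, the claim that each added edge repairs at most one deficient $v_i$, uniformly over all proper $3$-colourings of the augmented graph, is the step that would need a real argument. So your proposal and the paper share the same gap on the lower bound, but you diagnose it while the paper elides it.
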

\begin{proof}
Note that for $\delta(\mu(P_n))=2$ and hence we have to find the minimum number of edges to be added to $\mu(P_n)$ so that the reduced graph becomes $J$-colorable using $3$ colors. For this, first assign colors $c_1$ and $c_2$ alternatively to the vertices $v_1,v_2,\ldots,v_n$. Now color the vertices $u_i$ such that $u_i$ and its twin vertex $v_i$ have the same color. Since the vertex $w$ is adjacent to all $u_i$'s, it can be seen that it must have a different color, say $c_3$ (see Figure \ref{Fig-cmp7}).

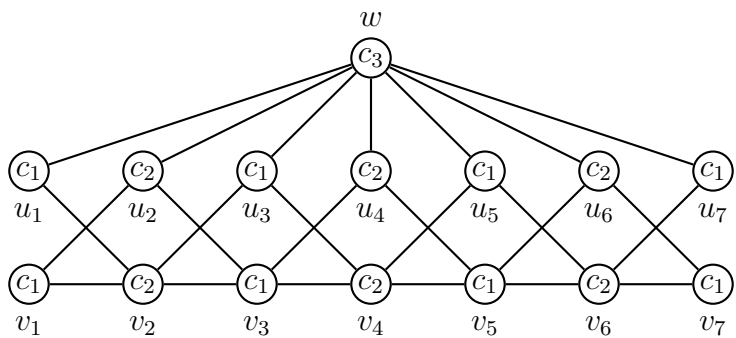
\begin{figure}[h!]
\centering
\begin{tikzpicture}[auto,node distance=1.5cm,
thick,main node/.style={circle,draw,font=\sffamily\Large\bfseries}]

\vertex [label=below:$v_1$] (1) {$c_1$};
\vertex [label=below:$v_2$] (2) [right of=1] {$c_2$};
\vertex [label=below:$v_3$] (3) [right of=2] {$c_1$};
\vertex [label=below:$v_4$] (4) [right of=3] {$c_2$};
\vertex [label=below:$v_5$] (5) [right of=4] {$c_1$};
\vertex [label=below:$v_6$] (6) [right of=5] {$c_2$};
\vertex [label=below:$v_7$] (7) [right of=6] {$c_1$};

\vertex [label=below:$u_1$] (8) [above of=1] {$c_1$};
\vertex [label=below:$u_2$] (9) [above of=2] {$c_2$};
\vertex [label=below:$u_3$] (10) [above of=3] {$c_1$};
\vertex [label=below:$u_4$] (11) [above of=4] {$c_2$};
\vertex [label=below:$u_5$] (12) [above of=5] {$c_1$};
\vertex [label=below:$u_6$] (13) [above of=6] {$c_2$};
\vertex [label=below:$u_7$] (14) [above of=7] {$c_1$};

\vertex [label=above:$w$] (15) [above of=11] {$c_3$};

\path[every node/.style={font=\sffamily\small}]
(1) edge node {} (2)
edge node {} (9)
(2) edge node {} (3)
edge node {} (8)
edge node {} (10)
(3) edge node {} (4)
edge node {} (9)
edge node {} (11)
(4) edge node {} (5)
edge node {} (10)
edge node {} (12)
(5) edge node {} (6)
edge node {} (11)
edge node {} (13)
(6) edge node {} (7)
edge node {} (12)
edge node {} (14)
(7) edge node {} (13)
(15) edge node {} (8)
edge node {} (9)
edge node {} (10)
edge node {} (11)
edge node {} (12)
edge node {} (13)
edge node {} (14);
\end{tikzpicture}
\caption{A $3$-coloring of $\mu(P_7)$.}
\label{Fig-cmp7}
\end{figure}

We notice the following points in this context:
\begin{enumerate}\itemsep0mm
\item[(i)] No vertex $v_i$ in $V(\mu(P_n))$ belongs to a rainbow neighbourhood of $\mu(P_n)$, as none of them is adjacent to a vertex having color $c_3$;
\item[(ii)] Every vertex $u_i$ with color $c_2$ is adjacent to at least one vertex $v_j$ with color $c_2$ and the vertex $w$ with color $c_3$, thus belonging to some rainbow neighbourhood in $\mu(P_n)$.
\item[(iii)] Every vertex $u_j$ with color $c_2$ is adjacent to at least one vertex $v_k$ with color $c_1$ the vertex $w$ with color $c_3$, thus belonging to some rainbow neighbourhood in $\mu(P_n)$.   
\item[(iv)] The vertex $w$, being adjacent to all vertices $u_i$, belongs to some rainbow neighbourhoods in $\mu(P_n)$.
\end{enumerate}

Therefore, from the above arguments, what we need is to draw edges from the vertices $v_i$ to the vertex $w$ so that they also are in  some rainbow neighbourhoods of $G$. Therefore, $\varrho(\mu(P_n))=n$.
\end{proof} 

\begin{theorem}
$\varrho(\mu(C_n))=n+2r$, where $r\in \N$ is given by $n\equiv r({\rm mod}\,3)$.
\end{theorem}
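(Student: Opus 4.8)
The plan is to follow the template of the preceding theorem on $\varrho(\mu(P_n))$, but now working with $\delta(\mu(C_n))+1=4$ colours: in $\mu(C_n)$ each vertex $u_i$ is adjacent exactly to $v_{i-1}$, $v_{i+1}$ and $w$, so $\deg(u_i)=3$, while $\deg(v_i)=4$ and $\deg(w)=n$, giving $\delta(\mu(C_n))=3$. Thus $\varrho(\mu(C_n))$ is the least number of edges that must be added to $\mu(C_n)$ so that the resulting graph carries a proper $4$-colouring in which every vertex lies in a rainbow neighbourhood.

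First I would fix a convenient initial colouring: colour $w$ with $c_4$; colour $v_1,v_2,\ldots,v_n$ by cycling through $c_1,c_2,c_3$ periodically; and colour each $u_i$ with the colour of $\{c_1,c_2,c_3\}$ absent from $\{c(v_{i-1}),c(v_{i+1})\}$ whenever $c(v_{i-1})\neq c(v_{i+1})$. If $3\mid n$ (the case $r=0$) this colouring is proper and every consecutive triple $v_{i-1},v_i,v_{i+1}$ carries all of $c_1,c_2,c_3$; hence every $u_i$ already sees all four colours, $w$ sees all four colours among $u_1,\ldots,u_n$, and each $v_i$ sees exactly $\{c_1,c_2,c_3\}$ in its closed neighbourhood. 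Adding the $n$ edges $v_iw$ then places every $v_i$ in a rainbow neighbourhood without harming anyone, so $\varrho(\mu(C_n))\le n$. When $n\equiv r\pmod 3$ with $r\in\{1,2\}$, the periodic pattern cannot close up around the cycle, and one is forced to leave $r$ \emph{defective} positions $i$ at which $c(v_{i-1})=c(v_{i+1})$; at such a position $u_i$ is no longer in a rainbow neighbourhood, and the $v_i$ sitting there loses a colour as well. For each defective position I would add two repair edges --- one joining $u_i$ to a vertex carrying the colour it misses, one joining the affected $v$-vertex to a vertex carrying its missing colour --- so that, together with the $n$ edges $v_iw$, we obtain $\varrho(\mu(C_n))\le n+2r$.

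For the reverse inequality I would argue that these additions are forced. The two key points are: (a) the colour of $w$ can appear on no $u_i$, and one should show that in an optimal configuration it appears on no $v_i$ either (spending that colour on some $v_i$ produces new defective $u_{i\pm 1}$'s and costs at least as much as it saves), so every $v_i$ must be joined by a new edge to a vertex of that colour, forcing at least $n$ new edges incident to the $v$-side; and (b) the colouring induced on the original cycle $C_n$ uses at most three colours, and a parity argument on a cycle of length $n\not\equiv 0\pmod 3$ shows that at least $r$ positions $i$ must satisfy $c(v_{i-1})=c(v_{i+1})$, each of which needs a further edge at $u_i$ and, by a separate local check, forces a further edge near the corresponding $v_i$. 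A careful bookkeeping of which added edges can be \emph{shared} between a defective $u_i$ and the $v$-vertices it helps should then yield that no fewer than $2r$ extra edges are possible, completing the proof.

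The main obstacle is precisely this lower bound. Producing the $(n+2r)$-edge construction is routine; the delicate part is to rule out cleverer strategies --- in particular to show that redistributing $w$'s colour onto the $v$-cycle, or choosing a different proper $3$-colouring of $C_n$ with the defective positions placed or merged differently, cannot push the total below $n+2r$. I expect this to require a discharging-type argument that charges each added edge either to a $v_i$ needing to reach $w$'s colour or to a defective position of the underlying $3$-colouring of $C_n$, together with a proof that these two kinds of demand cannot be met simultaneously by a single edge more than a bounded number of times.
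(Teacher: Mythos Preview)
Your upper-bound construction is essentially the paper's. The paper colours $w$ with $c_4$, runs $c_1,c_2,c_3$ periodically along $v_1,\ldots,v_n$, and gives each $u_i$ the colour of its twin $v_i$; when the position is not defective this agrees with your rule of assigning $u_i$ the colour of $\{c_1,c_2,c_3\}$ missing from $\{c(v_{i-1}),c(v_{i+1})\}$. It then adds the $n$ edges $v_iw$ and, in the cases $n\equiv 1,2\pmod 3$, exactly the two (respectively four) repair edges you describe --- one at the deficient $v$-vertex and one at the corresponding $u$-vertex for each defect. Your ``$r$ defective positions'' language is just a uniform packaging of the paper's three-case split.

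Where you diverge is on the lower bound. The paper does not prove one: within its chosen colouring it simply asserts that certain edges ``need'' to be drawn and tallies them, never arguing that a different proper $4$-colouring of an augmented $\mu(C_n)$, or a different placement of colour $c_4$, could not achieve the rainbow condition with fewer additions. Your diagnosis that this is the real obstacle is correct, and your sketch (ruling out $c_4$ on the $v$-cycle, then a parity count forcing $r$ defects in any $3$-colouring of $C_n$, then a discharging argument to prevent sharing) already goes beyond anything in the paper. So your plan matches the paper on what the paper actually proves, and is more candid about what it does not.
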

\begin{proof}
Since $\delta(\mu(C_n))=3$, the maximum number of colors in its $J$-coloring is $4$. Hence, we have to find the minimum number of edges to be added to $\mu(P_n)$ so that the reduced graph becomes $J$-colorable using $4$ colors. Here we have to consider the following cases:

\textit{Case-1}: Let $n\equiv 0 ({\rm mod}\,3)$. Then, we can assign colors $c_1,c_2$ and $c_3$ alternatively to the vertices $v_1,v_2,\ldots,v_n$. As mentioned in the previous result, we can color the vertices $u_i$ such that $u_i$ and its twin vertex $v_i$ have the same color. Since the vertex $w$ is adjacent to all $u_i$'s, it must have a different color, say $c_4$ (see Figure \ref{Fig-mpcmc9}). In this case, all vertices $u_i$ and the vertex $w$ will belong to some rainbow neighbourhoods of $\mu(C_n)$, but no vertex $v_i$ has an adjacent vertex having colour $c_4$. So, we need to draw edges from all $v_i;\ 1\le i\le n$ to the vertex $w$ in order to include them in some rainbow neighbourhoods of $\mu(C_n)$.

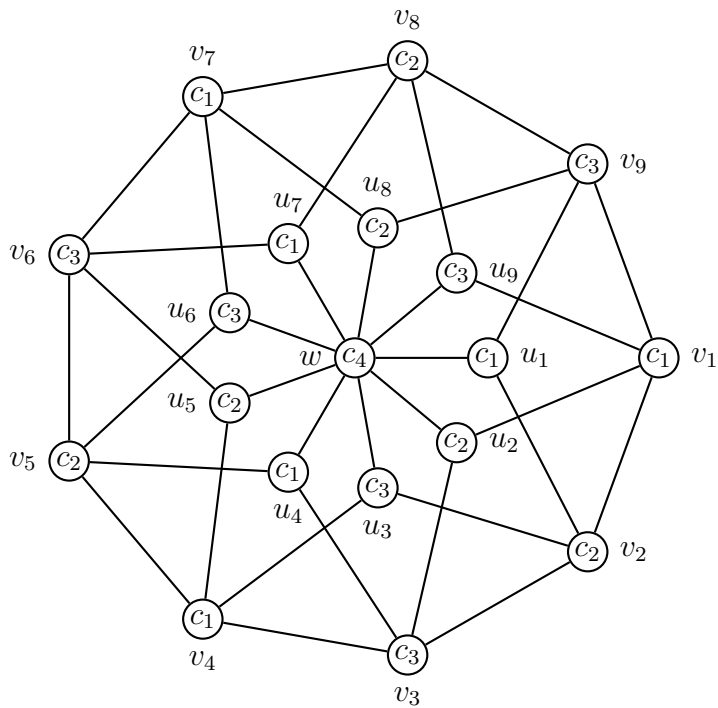
\begin{figure}[h!]
\centering
\begin{tikzpicture}[auto,node distance=2cm,
thick,main node/.style={circle,draw,font=\sffamily\Large\bfseries}]
\vertex (w) at (0:0) [label=left:$w$]{$c_4$};
\vertex (v1) at (0:4) [label=right:$v_{1}$]{$c_1$};
\vertex (v2) at (320:4) [label=right:$v_{2}$]{$c_2$};
\vertex (v3) at (280:4) [label=below:$v_{3}$]{$c_3$};
\vertex (v4) at (240:4) [label=below:$v_{4}$]{$c_1$};
\vertex (v5) at (200:4) [label=left:$v_{5}$]{$c_2$};
\vertex (v6) at (160:4) [label=left:$v_{6}$]{$c_3$};
\vertex (v7) at (120:4) [label=above:$v_{7}$]{$c_1$};
\vertex (v8) at (80:4) [label=above:$v_{8}$]{$c_2$};
\vertex (v9) at (40:4) [label=right:$v_{9}$]{$c_3$};
\vertex (u1) at (0:1.75) [label=right: $u_{1}$]{$c_1$};
\vertex (u2) at (320:1.75) [label=right:$u_{2}$]{$c_2$};
\vertex (u3) at (280:1.75) [label=below:$u_{3}$]{$c_3$};
\vertex (u4) at (240:1.75) [label=below:$u_{4}$]{$c_1$};
\vertex (u5) at (200:1.75) [label=left:$u_{5}$]{$c_2$};
\vertex (u6) at (160:1.75) [label=left:$u_{6}$]{$c_3$};
\vertex (u7) at (120:1.75) [label=above:$u_{7}$]{$c_1$};
\vertex (u8) at (80:1.75) [label=above:$u_{8}$]{$c_2$};
\vertex (u9) at (40:1.75) [label=right:$u_{9}$]{$c_3$};
\path 
(v1) edge (v2)
(v2) edge (v3)
(v3) edge (v4)
(v4) edge (v5)
(v5) edge (v6)
(v6) edge (v7)
(v7) edge (v8)
(v8) edge (v9)
(v9) edge (v1)

(u1) edge (v2)
(u1) edge (v9)
(u2) edge (v1)
(u2) edge (v3)
(u3) edge (v2)
(u3) edge (v4)
(u4) edge (v3)
(u4) edge (v5)
(u5) edge (v4)
(u5) edge (v6)
(u6) edge (v5)
(u6) edge (v7)
(u7) edge (v6)
(u7) edge (v8)
(u8) edge (v7)
(u8) edge (v9)
(u9) edge (v1)
(u9) edge (v8)

(w) edge (u1)
(w) edge (u2)
(w) edge (u3)
(w) edge (u4)
(w) edge (u5)
(w) edge (u6)
(w) edge (u7)
(w) edge (u8)
(w) edge (u9)
;
\end{tikzpicture}
\caption{A minimal proper coloring of $\mu(C_9)$}
\label{Fig-mpcmc9}
\end{figure}

\textit{Case-2}: Let $n\equiv 1 ({\rm mod}\,3)$. Then, we can assign colors $c_1,c_2$ and $c_3$ alternatively to the vertices $v_1,v_2,\ldots,v_{n-1}$. The vertex $v_n$ can be colored only by $c_2$, as it is adjacent to $v_1$ with color $c_1$ and to $v_{n-1}$ with color $c_3$. Here, we notice that the vertex $v_1$ is not adjacent to any vertex having color $c_3$. Here, we need to draw an edge between $v_1$ and one of the vertices having color $c_3$. 

If we label the vertices $u_i$ in such a way that the twin vertices have the same color, then as in the case of $v_1$, the vertex $u_1$ is not adjacent to any vertex of color $c_3$. Hence, we need to draw an edge from $u_1$ to any one of the vertices having color $c_3$.

Since $w$ is adjacent to all $u_i$, $w$ must have the fourth color $c_4$.  Since every vertex $u_i$ is adjacent to $w$, all these vertices,(except $u_1$) belong to some rainbow neighbourhood of $\mu(C_n)$. (Also, note that when we draw an edge from $u_1$ to a vertex having color $c_3$, it will also belong to some rainbow neighbourhood).

Since no vertex $v_i$ is adjacent to a vertex having colour $c_4$, each of them is to be connected to the vertex $w$ by a new edge. Therefore, in this case $\varrho(\mu(C_n))=n+2$.

\textit{Case-3}: Let $n\equiv 2 ({\rm mod}\,3)$. Then, we can assign colors $c_1,c_2$ and $c_3$ alternatively to the vertices $v_1,v_2,\ldots,v_{n-2}$. Then, the vertex $v_{n-1}$ gets the color  $c_1$, the vertex $v_1$ can have the color color $c_2$ Note that the vertex $v_1$ and $v_n$ are not adjacent to any vertex having color $c_3$. Here, we need to draw one edge each from $v_1$ and $v_2$ to some vertices having color $c_3$. 

If we label the vertices $u_i$ in such a way that the twin vertices have the same color, then as in the case of $v_1$ and $v_2$, the vertices $u_1$ and $u_n$ will not be adjacent to any vertex of color $c_3$. Hence, we need to draw one edge each from $u_1$ and $u_3$ to some of the vertices having color $c_3$.

The vertex $w$ gets the color $c_4$ and as mentioned in the above cases, we need to draw edges from all vertices $v_i$ to $w$ so that all vertices in $\mu(C_n)$ belong to some rainbow neighbourhoods in $\mu(C_n)$. Therefore, in this case $\varrho(\mu(C_n))=n+4$.
\end{proof}


\section{Conclusion}

In this paper, we have proved that the Mycielskian of any graph $G$ will not have a $J$-coloring, irrespective of whether $G$ has a $J$-coloring or not. We have also checked the existence of $J$-coloring for certain new Mycielski type graphs constructed from certain graphs. There is a wide scope for further studies in this area by exploring for new and related graph constructions.

We have also investigated the possibility of defining $J$-colorings for given graphs by adding new edges between their non-adjacent vertices. Furthermore, we have determined the minimum number of such edges to be introduced for the Mycielskian of paths and cycles. The studies in this area for more graph classes and more derived graphs are also promising.


\section*{Declaration}
The authors declare that they have no competing interests. Both the authors contributed significantly in writing this article. The authors read and approved the final manuscript.




\end{document}